\newtheorem{theorem}{Theorem}[section]
\newtheorem{proposition}{Proposition}[section]
\newcommand{\R}{\mathbb{R}}
\newcommand{\Z}{\mathbb{Z}}
\newenvironment{proof}[1][Proof]{\noindent\textbf{#1.} }{\ \rule{0.5em}{0.5em}}
\journal{J. Differ. Equations Appl.}
\begin{document}

\begin{frontmatter}



\title{On difference equations of Kravchuk-Sobolev type polynomials of higher order}


\author[AS]{A. Soria-Lorente}
\author[RC]{and Roberto S. Costas-Santos}
\ead{asorial@udg.co.cu}
\address[AS]{Facultad de Ciencias T\'ecnicas, Universidad de Granma}
\address[RC]{Dpto. de F\'isica y Matem\'{a}ticas, Universidad de Alcal\'{a},
28871, Alcal\'{a} de Henares, Spain}
\ead{rscosa@gmail.com}


\begin{abstract}
In this contribution we consider sequences of monic polynomials orthogonal with
respect to Sobolev-type inner product 
\begin{multline*}
\left\langle f,g\right\rangle _{\lambda,\mu}= \sum_{0 \leq x\leq N}f(x)g(x)\frac{\Gamma(N+1) p^x(1-p)^{N-x} }{\Gamma (N-x+1) \Gamma(x+1) }\\+\lambda\Delta^j f(0)\Delta^j g(0)+\mu\Delta^j f(N)\Delta^j g(N),
\end{multline*}%
where $0<p <1$, $\lambda,\mu\in \R_{+}$, $n\leq N\in \Z_{+}$, $j\in \Z_{+}$ and $\Delta$ denotes the forward difference operators. We derive an explicit representation for these polynomials. In addition, the ladder operators associated with these polynomials are obtained. As a consequence, the linear difference equations of second order are also given.

\end{abstract}

\end{frontmatter}

\section{Introduction}

\label{[S1]-Intro}



The study of the sequences of polynomials orthogonal with respect to the
Sobolev inner product 

In this contribution we will focus our attention on the sequence $\{\mathbb{K%
}_{n}^{(j)}\}_{n\geq 0}$ of monic polynomials orthogonal with respect to the
following inner product on $\mathbb{P}$ involving differences of higher
order 
\begin{multline}
\left\langle f,g\right\rangle _{\lambda ,\mu }=\sum_{0\leq x\leq N}f(x)g(x)%
\frac{\Gamma (N+1)p^{x}(1-p)^{N-x}}{\Gamma (N-x+1)\Gamma (x+1)} \\
+\lambda \Delta ^{j}f(0)\Delta g(0)+\mu \Delta ^{j}f(N)\Delta g(N),
\label{SobIP}
\end{multline}%
where $0<p <1$, $\lambda,\mu\in \R_{+}$, $n\leq N\in \Z_{+}$, $j\in \Z_{+}$ and $\Delta $ denotes the forward difference operators defined by $%
\Delta f\left( x\right) =f\left( x+1\right) -f\left( x\right) $, where $%
\Delta ^{j}f(x)=\Delta \lbrack \Delta ^{j-1}f(x)]$. In this work we get the
connection formula and the hypergeometric representation of the polynomials
orthogonal with respect to \eqref{SobIP}. Moreover, we find the ladder
(creation and annihilation) operators associated with such a sequence of
polynomials. Finally, we deduce the second order linear difference equation
that those Sobolev type polynomials satisfy.

The structure of the paper is the following: In Section 2, we introduce some
preliminary results about Kravchuk polynomials which will be very useful in
the analysis presented. In Section 3, we obtain the connection formula
between the Kravchuk polynomials and the polynomials orthogonal with respect
to \eqref{SobIP}, as well as we deduce the hypergeometric representation of
such polynomials. Finally, in Section 4, we find the ladder (creation and
annihilation) operators for the sequence of orthogonal polynomials of
Sobolev type. As a consequence, the second order linear difference equations
associated with them are deduced.

\section{Preliminary results}

\label{[S2]-Prelim}

Let $\{K_{n}^{p,N}\}_{n\geq 0}$ be the sequence of monic Kravchuk
polynomials \cite{alv2}, orthogonal with respect to the inner
product on $\mathbb{P}$ 
\begin{equation*}
\left\langle f,g\right\rangle =\sum_{0 \leq x\leq N}f(x)g(x)\frac{%
\Gamma(N+1) p^x(1-p)^{N-x} }{\Gamma (N-x+1) \Gamma(x+1) },\quad 0<p <1,
\end{equation*}%
which can be explicitly given by%
\begin{equation}
K_{n}^{p,N}\left( x\right) =p^n\left(-N\right) _{n}\,_{2}F_{1}\left( 
\begin{array}{c|c}
-n,-x &  \\ 
& p^{-1} \\ 
-N & 
\end{array}%
\right) ,  \label{MRH}
\end{equation}%
with $n\leq N$, please refer to \cite%
{algama,alv2}. Here, $_{r}F_{s}$
denotes the ordinary hypergeometric series defined by%
\begin{equation}
_{r}F_{s}\left( 
\begin{array}{c|c}
a_{1},\ldots ,a_{r} &  \\ 
& x \\ 
b_{1},\ldots ,b_{s} & 
\end{array}%
\right) =\sum_{k\geq 0}\frac{\left( a_{1},\ldots ,a_{r}\right) _{k}}{\left(
b_{1},\ldots ,b_{s}\right) _{k}}\frac{x^{k}}{k!},  \label{HS}
\end{equation}%
where 
\begin{equation*}
\left( a_{1},\ldots ,a_{r}\right) _{k}:=\prod_{1\leq i\leq r}\left(
a_{i}\right) _{k},
\end{equation*}%
and $\left( \cdot \right) _{n}$ denotes the Pochhammer symbol \cite{arso, gara}, also called as the shifted factorial, defined by 
\begin{equation*}
(x)_{n}=\prod_{0\leq j\leq n-1}\left( x+j\right) ,\quad n\geq 1,\quad \left(
x\right) _{0}=1.
\end{equation*}%
Moreover, $\left\{a_{i}\right\} _{i=1}^{r}$ and $\left\{ b_{j}\right\}
_{j=1}^{s}$ are complex numbers subject to the condition that $b_{j}\neq -n$
with $n\in \mathbb{N}\backslash \left\{ 0\right\} $ for $j=1,2,\ldots ,s$.

Next, we summarize some basic properties of Meixner orthogonal polynomials
to be used in the sequel.

\begin{proposition}
\label{S2-PROP-1}Let $\{K_{n}^{p,N}\}_{n\geq 0}$ be the classical Kravchuk
sequence monic orthogonal polynomials. The following statements hold.

\begin{enumerate}
\item Three term recurrence relation \cite{arcovan} 
\begin{equation}
xK_{n}^{p,N}\left( x\right) =K_{n+1}^{p,N}\left( x\right) +\alpha
_{n}^{p,N}K_{n}^{p,N}\left( x\right) +\beta _{n}^{p,N}K_{n-1}^{p,N}\left(
x\right) ,\quad n\geq 0,  \label{ReR}
\end{equation}%
where%
\begin{equation*}
\alpha _{n}^{p,N}=p(N-n)+n(1-p)\quad\text{and}\quad \beta
_{n}^{p,N}=np(1-p)(N-n+1),
\end{equation*}%
with initial conditions $K_{-1}^{p,N}\left( x\right) =0$, and $%
K_{0}^{p,N}\left( x\right) =1$.

\item Structure relations. For every $n\in \mathbb{N}$, \cite%
{alv2} 
\begin{equation}
x\nabla K_{n}^{p,N}\left( x\right) =nK_{n}^{p,N}\left( x\right)
+np(N-n+1)K_{n-1}^{p,N}\left(x\right).  \label{StruR1}
\end{equation}

\item Squared norm. For every $n\in \mathbb{N}$, \cite{alv2} 
\begin{equation}
\left\Vert K_{n}^{p,N}\right\Vert ^{2}=n!(-N)_np^n(p-1)^n.  \label{Norm2}
\end{equation}

\item Orthogonality relation. For $a<0$ 
\begin{equation*}
\sum_{0 \leq x\leq N}K_{m}^{p,N}(x)K_{n}^{p,N}(x)\frac{\Gamma(N+1)
p^x(1-p)^{N-x} }{\Gamma (N-x+1) \Gamma(x+1) }=\left\Vert
K_{n}^{p,N}\right\Vert ^{2}\delta _{m,n},
\end{equation*}%
where by $\delta _{i,j}$ we denote the Kronecker delta function.

\item Second order difference equation (hypergeometric type equation) 
\begin{equation*}
(1-p)x\Delta \nabla K_{n}^{p,N}(x) + (Np-x)\Delta K_{n}^{p,N}(x) +
nK_{n}^{p,N}(x) =0.
\end{equation*}

\item Forward operator 
\begin{equation*}
\Delta ^{k}K_{n}^{p,N}(x) =\left[ n\right] _{k}K_{n-k}^{p,N-k}(x),
\end{equation*}%
where $\left[ \cdot \right] _{n}$ denotes the Falling Factorial \cite[p. 6]%
{arso}, defined by%
\begin{equation*}
\left[ z\right] _{n}\equiv \left( -1\right) ^{n}\left( -z\right) _{n},\quad
n\geq 1,\quad \left[ z\right] _{0}=1.
\end{equation*}
\end{enumerate}
\end{proposition}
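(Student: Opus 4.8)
All six items follow from the explicit hypergeometric representation \eqref{MRH} together with the standard machinery for classical discrete orthogonal polynomials; most are classical (see the cited references), so I only indicate the route. The computation underlying everything is the Pearson-type identity for the Kravchuk weight $\rho(x)=\tfrac{\Gamma(N+1)}{\Gamma(N-x+1)\Gamma(x+1)}p^{x}(1-p)^{N-x}$: from $\rho(x+1)/\rho(x)=\tfrac{p(N-x)}{(1-p)(x+1)}$ one reads off
\begin{equation*}
\Delta\!\left[\sigma(x)\rho(x)\right]=\tau(x)\rho(x),\qquad\sigma(x)=(1-p)x,\quad\tau(x)=Np-x .
\end{equation*}
Since $\sigma(0)\rho(0)=0$ and $\rho$ vanishes for $x>N$, discrete (Abel) summation by parts against $\rho$ on $\{0,1,\dots,N\}$ produces no boundary terms; this is the one technical point to watch.

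For items 5 and 4, set $\mathcal{L}[y]=\sigma(x)\Delta\nabla y+\tau(x)\Delta y$ with $\nabla y(x)=y(x)-y(x-1)$. The Pearson identity gives $\rho(x)\mathcal{L}[y](x)=\Delta\!\left[\sigma(x)\rho(x)\nabla y(x)\right]$, whence summation by parts yields $\langle\mathcal{L}[f],g\rangle=-\sum_{x}\sigma(x+1)\rho(x+1)\Delta f(x)\Delta g(x)=\langle f,\mathcal{L}[g]\rangle$; moreover $\mathcal{L}$ preserves degrees, and inspecting leading terms ($\Delta\nabla x^{n}=n(n-1)x^{n-2}+\cdots$, $\Delta x^{n}=nx^{n-1}+\cdots$) gives $\mathcal{L}[K_{n}^{p,N}]=-nK_{n}^{p,N}+(\text{lower degree})$. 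Hence $\mathcal{L}[K_{n}^{p,N}]+nK_{n}^{p,N}$ has degree $\le n-1$, and for any $q$ with $\deg q\le n-1$ self-adjointness gives $\langle\mathcal{L}[K_{n}^{p,N}]+nK_{n}^{p,N},q\rangle=\langle K_{n}^{p,N},\mathcal{L}[q]+nq\rangle=0$ (because $\deg(\mathcal{L}[q]+nq)\le n-1$); therefore $\mathcal{L}[K_{n}^{p,N}]+nK_{n}^{p,N}\equiv0$, which is item 5. Item 4 is then immediate: orthogonality holds by construction of $\{K_{n}^{p,N}\}$, and the value of the norm is item 3.

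For item 6, apply $\Delta$ in $x$ termwise to \eqref{MRH}; from $\Delta_{x}(-x)_{k}=(-x-1)_{k}-(-x)_{k}=-k(-x)_{k-1}$ and the shift $k\mapsto m+1$, using $(-n)_{m+1}=-n(1-n)_{m}$ and $(-N)_{m+1}=-N(1-N)_{m}$, the differentiated series collapses to a constant multiple of ${}_{2}F_{1}\!\left(-(n-1),-x;-(N-1);p^{-1}\right)$; matching prefactors via $(-N)_{n}=(-N)(1-N)_{n-1}$ yields $\Delta K_{n}^{p,N}(x)=nK_{n-1}^{p,N-1}(x)$, and iterating $k$ times gives $\Delta^{k}K_{n}^{p,N}(x)=n(n-1)\cdots(n-k+1)K_{n-k}^{p,N-k}(x)=[n]_{k}K_{n-k}^{p,N-k}(x)$.

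For items 1--3, since $\rho$ is a positive mass distribution on $\{0,\dots,N\}$, general theory gives the monic recurrence \eqref{ReR} with $\alpha_{n}^{p,N}=\langle xK_{n}^{p,N},K_{n}^{p,N}\rangle/\|K_{n}^{p,N}\|^{2}$ and $\beta_{n}^{p,N}=\|K_{n}^{p,N}\|^{2}/\|K_{n-1}^{p,N}\|^{2}>0$. Extracting the coefficient $b_{n}$ of $x^{n-1}$ in $K_{n}^{p,N}$ from the $k=n$ and $k=n-1$ terms of \eqref{MRH} and comparing $x^{n}$-coefficients in \eqref{ReR} gives $\alpha_{n}^{p,N}=b_{n}-b_{n+1}=p(N-n)+n(1-p)$. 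For item 2, $x\nabla K_{n}^{p,N}$ has degree $n$ and leading coefficient $n$, so $x\nabla K_{n}^{p,N}=nK_{n}^{p,N}+\sum_{j<n}c_{j}K_{j}^{p,N}$; writing $x\rho(x)=\sigma(x)\rho(x)/(1-p)$ and summing by parts with the Pearson identity forces $c_{j}=0$ for $j\le n-2$ and $c_{n-1}=\|K_{n}^{p,N}\|^{2}/\big((1-p)\|K_{n-1}^{p,N}\|^{2}\big)=\beta_{n}^{p,N}/(1-p)$, while comparing $x^{n-1}$-coefficients in the resulting two-term identity and using the known $\alpha_{k}^{p,N}$ gives $c_{n-1}=\sum_{k=0}^{n-1}\alpha_{k}^{p,N}-\binom{n}{2}=np(N-n+1)$; this is item 2 and also yields $\beta_{n}^{p,N}=np(1-p)(N-n+1)$. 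Finally $\|K_{0}^{p,N}\|^{2}=\sum_{x}\binom{N}{x}p^{x}(1-p)^{N-x}=1$, so telescoping gives
\begin{equation*}
\|K_{n}^{p,N}\|^{2}=\prod_{k=1}^{n}\beta_{k}^{p,N}=n!\,p^{n}(1-p)^{n}\,N(N-1)\cdots(N-n+1)=n!\,(-N)_{n}\,p^{n}(p-1)^{n},
\end{equation*}
using $N(N-1)\cdots(N-n+1)=(-1)^{n}(-N)_{n}$ and $(-1)^{n}(1-p)^{n}=(p-1)^{n}$; this is item 3. (Alternatively, items 1--3 may simply be transcribed from the normalized Kravchuk data in \cite{arcovan,alv2} after rescaling to monic form.) The main obstacle is the explicit identification of the constants in items 1--3: it hinges on the discrete integration-by-parts identities for $\Delta\nabla$ and $x\nabla$ together with the Pearson identity, where one must verify that the boundary contributions at $x=0$ and $x=N+1$ vanish (they do, as $\sigma(0)\rho(0)=0$ and $\sigma(N+1)\rho(N+1)=0$). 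Everything else reduces to routine manipulation of the hypergeometric series and a telescoping product.
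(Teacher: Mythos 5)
Your proposal is correct, but it does something genuinely different from the paper: the paper offers no proof of this proposition at all --- it is presented as a summary of classical facts about monic Kravchuk polynomials, with the reader sent to \cite{arcovan,alv2} (and is even mislabelled there as a list of ``Meixner'' properties). You instead derive everything from two primitives, the Pearson identity $\Delta[\sigma\rho]=\tau\rho$ with $\sigma(x)=(1-p)x$, $\tau(x)=Np-x$, and the hypergeometric formula \eqref{MRH}, and your computations check out: the ratio $\rho(x+1)/\rho(x)$, the self-adjointness $\langle\mathcal{L}[f],g\rangle=-\sum_x\sigma(x+1)\rho(x+1)\Delta f(x)\Delta g(x)$ (the boundary terms do vanish since $\sigma(0)\rho(0)=\rho(N+1)=0$), the eigenvalue $\lambda_n=-n\tau'-\tbinom{n}{2}\sigma''=n$, the lowering identity $\Delta K_n^{p,N}=nK_{n-1}^{p,N-1}$ via the shift $k\mapsto m+1$, the subleading coefficient $b_n=-\tbinom{n}{2}+pn(n-1-N)$ giving $\alpha_n^{p,N}=b_n-b_{n+1}=p(N-n)+n(1-p)$, the value $c_{n-1}=-b_n-\tbinom{n}{2}=np(N-n+1)$ in the structure relation, and the telescoped norm. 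What your route buys is a self-contained, verifiable derivation with all constants pinned down; what the paper's route buys is brevity, at the cost of leaving the normalization to the reader (these references use non-monic normalizations, so the rescaling you mention parenthetically is genuinely needed). Two small points worth making explicit if you polish this: the step ``orthogonal to $\mathbb{P}_{n-1}$ and of degree $\le n-1$ implies $\equiv 0$'' uses that the discrete inner product is positive definite only on $\mathbb{P}_N$, so the restriction $n\le N$ is essential; and the clause ``For $a<0$'' in item 4 is a leftover from the Meixner case in the paper itself, not something your argument needs to address.
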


Furthermore, we denote the $n$-th reproducing kernel by%
\begin{equation*}
{\mathscr K}_{n}(x,y)=\sum_{0\leq k\leq n}\frac{K_{k}^{p,N}(x)K_{k}^{p,N}(y)%
}{\left\Vert K_{k}^{p,N}\right\Vert ^{2}}.  \label{Kernel1}
\end{equation*}%
Then, for all $n\in \mathbb{N}$,%
\begin{equation*}
{\mathscr K}_{n}(x,y)=\frac{1}{\left\Vert K_{n}^{p,N}\right\Vert ^{2}}\frac{%
K_{n+1}^{p,N}(x)K_{n}^{p,N}(y)-K_{n+1}^{p,N}(y)K_{n}^{p,N}(x)}{x-y}.
\label{CDarb}
\end{equation*}%
Provided $\Delta^{k}f(x)=\Delta\left(\Delta^{k-1}f(x)\right)$ for the
partial finite difference of ${\mathscr K}_{n}(x,y)$ we will use the
following notation 
\begin{equation}
{\mathscr K}_{n}^{(i,j)}(x,y)=\Delta_{x}^{i}\left( \Delta_{y}^{j}{\mathscr K}%
_{n}\left( x,y\right) \right) =\sum_{0\leq k\leq n}\frac{%
\Delta^{i}K_{k}^{p,N}(x)\Delta^{j}K_{k}^{p,N}(y)}{\left\Vert
K_{k}^{p,N}\right\Vert ^{2}}.  \label{Kij}
\end{equation}%
The following result is obtained of similar way to \cite[Proposition 2.2]%
{arsoII}.

\begin{proposition}
Let $\left\{ K_{n}^{p,N}\right\} _{n\geq 0}$ be the sequence of monic
Kravchuk orthogonal polynomials. Then, the following statement holds, for
all $n\in \mathbb{N}$, 
\begin{equation}
{\mathscr K}_{n-1}^{\left( 0,j\right) }\left( x,y\right) = {\mathscr A}%
_n^{(j)}(x,y)K_{n}^{p,N}\left( x\right)+{\mathscr B}%
_n^{(j)}(x,y)K_{n-1}^{p,N}\left( x\right),  \label{Kernel0j}
\end{equation}
where 
\begin{equation*}
{\mathscr A}_n^{(j)}(x,y) = \frac{j!}{\left\Vert K_{n-1}^{p,N}\right\Vert
^{2}[x-y]_{j+1}}\sum_{0\leq k\leq j}\frac{\Delta^{k}K_{n-1}^{p,N}\left(
y\right) }{k!}[x-y]_{k},
\end{equation*}
and 
\begin{equation*}
{\mathscr B}_n^{(j)}(x,y) = -\frac{j!}{\left\Vert K_{n-1}^{p,N}\right\Vert
^{2}[x-y]_{j+1}}\sum_{0\leq k\leq j}\frac{\Delta^{k}K_{n}^{p,N}\left(
y\right) }{k!}[x-y]_{k}.
\end{equation*}
\end{proposition}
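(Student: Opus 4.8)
The plan is to start from the Christoffel--Darboux identity recalled above, written with the index shifted from $n$ to $n-1$,
\begin{equation*}
\mathscr{K}_{n-1}(x,y)=\frac{1}{\left\Vert K_{n-1}^{p,N}\right\Vert ^{2}}\,\frac{K_{n}^{p,N}(x)K_{n-1}^{p,N}(y)-K_{n}^{p,N}(y)K_{n-1}^{p,N}(x)}{x-y},
\end{equation*}
and to apply the operator $\Delta_{y}^{j}$ to both sides. Since $x$ only plays the role of a parameter, $K_{n}^{p,N}(x)$ and $K_{n-1}^{p,N}(x)$ come out of the difference operator, so that
\begin{equation*}
\mathscr{K}_{n-1}^{(0,j)}(x,y)=\frac{K_{n}^{p,N}(x)}{\left\Vert K_{n-1}^{p,N}\right\Vert ^{2}}\,\Delta_{y}^{j}\!\Big[\frac{K_{n-1}^{p,N}(y)}{x-y}\Big]-\frac{K_{n-1}^{p,N}(x)}{\left\Vert K_{n-1}^{p,N}\right\Vert ^{2}}\,\Delta_{y}^{j}\!\Big[\frac{K_{n}^{p,N}(y)}{x-y}\Big].
\end{equation*}
Comparing this with the definitions of $\mathscr{A}_{n}^{(j)}$ and $\mathscr{B}_{n}^{(j)}$, the whole proposition reduces to proving the single auxiliary identity
\begin{equation*}
\Delta_{y}^{j}\!\Big[\frac{h(y)}{x-y}\Big]=\frac{j!}{[x-y]_{j+1}}\sum_{0\le k\le j}\frac{[x-y]_{k}}{k!}\,\Delta^{k}h(y),
\end{equation*}
valid for an arbitrary function $h$; substituting $h=K_{n-1}^{p,N}$ in the first term and $h=K_{n}^{p,N}$ in the second then immediately gives \eqref{Kernel0j}.

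To establish the auxiliary identity I would first prove, by a one-line induction on $m$ using $[x-y-1]_{m+1}^{-1}-[x-y]_{m+1}^{-1}=(m+1)[x-y]_{m+2}^{-1}$, the elementary formula $\Delta_{y}^{m}\big[(x-y)^{-1}\big]=m!\,[x-y]_{m+1}^{-1}$. Then I would apply the Leibniz rule for the forward difference, $\Delta^{j}(fg)(y)=\sum_{k=0}^{j}\binom{j}{k}(\Delta^{k}f)(y)\,(\Delta^{j-k}g)(y+k)$, with $f=h$ and $g(y)=(x-y)^{-1}$, which yields $\Delta_{y}^{j}[h(y)/(x-y)]=\sum_{k=0}^{j}\binom{j}{k}(j-k)!\,\Delta^{k}h(y)\,[x-y-k]_{j-k+1}^{-1}$. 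Finally the identities $\binom{j}{k}(j-k)!=j!/k!$ and the falling-factorial splitting $[x-y]_{k}\,[x-y-k]_{j-k+1}=[x-y]_{j+1}$ (a direct rearrangement of the product of consecutive linear factors) turn this sum into precisely the claimed right-hand side.

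I expect the main obstacle to be purely organizational: keeping track of the shifted arguments $y\mapsto y+k$ in the discrete Leibniz rule and verifying the splitting $[x-y]_{k}[x-y-k]_{j-k+1}=[x-y]_{j+1}$; once the auxiliary identity is in place, matching it against $\mathscr{A}_{n}^{(j)}$ and $\mathscr{B}_{n}^{(j)}$ is immediate. One should also note that $\mathscr{K}_{n-1}(x,y)$ is genuinely a polynomial in $x$ and $y$, so the apparent poles at $x-y\in\{0,-1,\dots,-j\}$ in the displayed formulas cancel; the identities above are to be read as identities of rational functions, equivalently after clearing the common denominator. This is the discrete counterpart of the argument used for \cite[Proposition 2.2]{arsoII}.
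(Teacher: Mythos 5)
Your proposal is correct: the auxiliary identity $\Delta_{y}^{j}\bigl[h(y)/(x-y)\bigr]=\tfrac{j!}{[x-y]_{j+1}}\sum_{0\le k\le j}\tfrac{[x-y]_{k}}{k!}\Delta^{k}h(y)$ checks out (the discrete Leibniz rule, the formula $\Delta_{y}^{m}[(x-y)^{-1}]=m!\,[x-y]_{m+1}^{-1}$ and the splitting $[x-y]_{k}[x-y-k]_{j-k+1}=[x-y]_{j+1}$ are all valid), and feeding the Christoffel--Darboux formula at level $n-1$ into it reproduces ${\mathscr A}_n^{(j)}$ and ${\mathscr B}_n^{(j)}$ exactly. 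The paper itself offers no proof, only a pointer to the analogous Meixner-case result, and your argument is precisely the standard route that reference takes, so there is nothing to add beyond your own (correct) caveat that the identities are to be read as rational-function identities with the apparent poles cancelling.
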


\section{Connection formula and hypergeometric representation of $\mathbb{K}%
_{n}^{(j)}(x)$}

In this section, we first express the Kravchuk-Sobolev type orthogonal
polynomials $\mathbb{K}_{n}^{(j)}(x)$ in terms of the monic Kravchuk
orthogonal polynomials $K_{n}^{p,N}\left( x\right) $ and the Kernel
polynomial $\left( \text{\ref{Kij}}\right) $. Taking into account the
Fourier expansion we have 
\begin{equation*}
\mathbb{K}_{n}^{(j)}(x)=K_{n}^{p,N}\left( x\right) +\sum_{0\leq k\leq
n-1}a_{n,k}K_{k}^{p,N}\left( x\right) .
\end{equation*}%
Then, from the properties of orthogonality of $K_{n}^{p,N}$ and $\mathbb{K}%
_{n}^{(j)}$ respectively, we arrived to 
\begin{equation*}
a_{n,k}=-\frac{\lambda \Delta ^{j}\mathbb{K}_{n}^{(j)}(0)\Delta
^{j}K_{k}^{p,N}(0)+\mu \Delta ^{j}\mathbb{K}_{n}^{(j)}(N)\Delta
^{j}K_{k}^{p,N}(N)}{\left\Vert K_{k}^{p,N}\right\Vert ^{2}},
\end{equation*}%
for $0\leq k\leq n-1$. Thus, we get 
\begin{multline*}
\mathbb{K}_{n}^{(j)}(x)=K_{n}^{p,N}\left( x\right) -\lambda \Delta ^{j}%
\mathbb{K}_{n}^{(j)}\left( 0\right) {\mathscr K}_{n-1}^{\left( 0,j\right)
}\left( x,0\right) \\
-\mu \Delta ^{j}\mathbb{K}_{n}^{(j)}\left( N\right) {\mathscr K}%
_{n-1}^{\left( 0,j\right) }\left( x,N\right) .
\end{multline*}%
After some manipulations, we deduce the following linear system with two
unknowns, namely $\Delta ^{j}\mathbb{K}_{n}^{(j)}\left( 0\right) $ and $%
\Delta ^{j}\mathbb{K}_{n}^{(j)}\left( N\right) $, 
\begin{equation*}
\begin{pmatrix}
1+\lambda {\mathscr K}_{n-1}^{\left( j,j\right) }\left( 0,0\right) & \mu {%
\mathscr K}_{n-1}^{\left( j,j\right) }\left( 0,N\right) \\ 
\lambda {\mathscr K}_{n-1}^{\left( j,j\right) }\left( N,0\right) & 1+\mu {%
\mathscr K}_{n-1}^{\left( j,j\right) }\left( N,N\right)%
\end{pmatrix}%
\cdot 
\begin{pmatrix}
\Delta ^{j}\mathbb{K}_{n}^{(j)}\left( 0\right) \\ 
\Delta ^{j}\mathbb{K}_{n}^{(j)}\left( N\right)%
\end{pmatrix}%
=%
\begin{pmatrix}
\Delta ^{j}K_{n}^{p,N}\left( 0\right) \\ 
\Delta ^{j}K_{n}^{p,N}\left( N\right)%
\end{pmatrix}%
.
\end{equation*}%
Thus, using Cramer's rule we obtain 
\begin{equation*}
\mathbb{K}_{n}^{(j)}(x)=K_{n}^{p,N}\left( x\right) -\lambda {\mathscr K}%
_{n-1}^{\left( 0,j\right) }\left( x,0\right) \Phi _{1}(j,n)-\mu {\mathscr K}%
_{n-1}^{\left( 0,j\right) }\left( x,N\right) \Phi _{2}(j,n),
\end{equation*}%
where 
\begin{equation*}
\Phi _{1}(j,n)=\frac{%
\begin{vmatrix}
\Delta ^{j}K_{n}^{p,N}\left( 0\right) & \mu {\mathscr K}_{n-1}^{\left(
j,j\right) }\left( 0,N\right) \\ 
\Delta ^{j}K_{n}^{p,N}\left( N\right) & 1+\mu {\mathscr K}_{n-1}^{\left(
j,j\right) }\left( N,N\right)%
\end{vmatrix}%
}{%
\begin{vmatrix}
1+\lambda {\mathscr K}_{n-1}^{\left( j,j\right) }\left( 0,0\right) & \mu {%
\mathscr K}_{n-1}^{\left( j,j\right) }\left( 0,N\right) \\ 
\lambda {\mathscr K}_{n-1}^{\left( j,j\right) }\left( N,0\right) & 1+\mu {%
\mathscr K}_{n-1}^{\left( j,j\right) }\left( N,N\right)%
\end{vmatrix}%
},
\end{equation*}%
and 
\begin{equation*}
\Phi _{2}(j,n)=\frac{%
\begin{vmatrix}
1+\lambda {\mathscr K}_{n-1}^{\left( j,j\right) }\left( 0,0\right) & \Delta
^{j}K_{n}^{p,N}\left( 0\right) \\ 
\lambda {\mathscr K}_{n-1}^{\left( j,j\right) }\left( N,0\right) & \Delta
^{j}K_{n}^{p,N}\left( N\right)%
\end{vmatrix}%
}{%
\begin{vmatrix}
1+\lambda {\mathscr K}_{n-1}^{\left( j,j\right) }\left( 0,0\right) & \mu {%
\mathscr K}_{n-1}^{\left( j,j\right) }\left( 0,N\right) \\ 
\lambda {\mathscr K}_{n-1}^{\left( j,j\right) }\left( N,0\right) & 1+\mu {%
\mathscr K}_{n-1}^{\left( j,j\right) }\left( N,N\right)%
\end{vmatrix}%
}.
\end{equation*}%
Then, from \eqref{Kernel0j} and the previous we deduce 
\begin{equation}
\mathbb{K}_{n}^{(j)}(x)={\mathscr C}_{1,n}^{(j)}(x)K_{n}^{p,N}\left(
x\right) +{\mathscr D}_{1,n}^{(j)}(x)K_{n-1}^{p,N}\left( x\right) ,
\label{SobP1}
\end{equation}%
where 
\begin{equation*}
{\mathscr C}_{1,n}^{(j)}(x)=1-\lambda \Phi _{1}(j,n){\mathscr A}%
_{n}^{(j)}(x,0)-\mu \Phi _{2}(j,n){\mathscr A}_{n}^{(j)}(x,N),
\end{equation*}%
and 
\begin{equation*}
{\mathscr D}_{1,n}^{(j)}(x)=-\lambda \Phi _{1}(j,n){\mathscr B}%
_{n}^{(j)}(x,0)-\mu \Phi _{2}(j,n){\mathscr B}_{n}^{(j)}(x,N).
\end{equation*}%
Next we will focus our attention in the representation of $Q_{n}^{i,\lambda
}\left( x\right) $ as hypergeometric functions.

\begin{theorem}
The monic Kravchuk-Sobolev orthogonal polynomials $\mathbb{K}_{n}^{(j)}(x)$
have the following hypergeometric representation for $n\leq N\in \Z_{+}$, $j\in \Z_{+}$,
\begin{equation}
\mathbb{K}_{n}^{(j)}(x)=p^{n-1}\left( -N\right) _{n-1}h_{n}^{\left( j\right)
}\left( x\right) \,_{3}F_{2}\left( 
\begin{array}{c|c}
-n,-x,f_{n}^{\left( j\right) }\left( x\right) &  \\ 
& p^{-1} \\ 
-N,f_{n}^{\left( j\right) }\left( x\right) -1 & 
\end{array}%
\right) ,  \label{MSPHR}
\end{equation}%
where $f_{n}^{\left( j\right) }\left( x\right) $ is given in $\left( \text{%
\ref{fx}}\right) $ and%
\begin{equation*}
h_{n}^{\left( j\right) }\left( x\right) =-\left( p\left( N-n+1\right) {%
\mathscr C}_{n}^{(j)}(x)-{\mathscr D}_{n}^{(j)}(x)\right) .
\end{equation*}
\end{theorem}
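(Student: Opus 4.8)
\emph{Sketch of proof.} The plan is to collapse the two-term connection formula \eqref{SobP1}, $\mathbb{K}_{n}^{(j)}(x)=\mathscr{C}_{n}^{(j)}(x)K_{n}^{p,N}(x)+\mathscr{D}_{n}^{(j)}(x)K_{n-1}^{p,N}(x)$, into a single ${}_{3}F_{2}$; throughout take $n\ge 1$. First I would trade $K_{n-1}^{p,N}$ for a first-order difference of $K_{n}^{p,N}$: solving the structure relation \eqref{StruR1} for $K_{n-1}^{p,N}(x)$ gives
\[
K_{n-1}^{p,N}(x)=\frac{x\nabla K_{n}^{p,N}(x)-nK_{n}^{p,N}(x)}{np(N-n+1)},
\]
so that after substitution $\mathbb{K}_{n}^{(j)}(x)$ becomes a linear combination of $K_{n}^{p,N}(x)$ and $x\nabla K_{n}^{p,N}(x)$. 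A short computation using $h_{n}^{(j)}(x)=-\big(p(N-n+1)\mathscr{C}_{n}^{(j)}(x)-\mathscr{D}_{n}^{(j)}(x)\big)$ shows the coefficient of $K_{n}^{p,N}(x)$ equals $A_{n}^{(j)}(x):=-h_{n}^{(j)}(x)/\big(p(N-n+1)\big)$ and that of $x\nabla K_{n}^{p,N}(x)$ equals $B_{n}^{(j)}(x):=\mathscr{D}_{n}^{(j)}(x)/\big(np(N-n+1)\big)$.

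Next I would pass to power series. From \eqref{MRH},
\[
K_{n}^{p,N}(x)=p^{n}(-N)_{n}\sum_{k\ge 0}\frac{(-n)_{k}(-x)_{k}}{(-N)_{k}\,k!}\,p^{-k},
\]
and applying $\nabla$ term by term (using $(-x)_{k}-(-x+1)_{k}=-k\,(-x+1)_{k-1}$ together with $x\,(-x+1)_{k-1}=-(-x)_{k}$) gives the companion expansion
\[
x\nabla K_{n}^{p,N}(x)=p^{n}(-N)_{n}\sum_{k\ge 0}\frac{(-n)_{k}(-x)_{k}}{(-N)_{k}\,k!}\,k\,p^{-k},
\]
i.e. the same series with an extra factor $k$ in the summand. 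Hence the right-hand side of the rewritten \eqref{SobP1} equals $p^{n}(-N)_{n}\sum_{k\ge 0}\frac{(-n)_{k}(-x)_{k}}{(-N)_{k}\,k!}\,p^{-k}\big(A_{n}^{(j)}(x)+k\,B_{n}^{(j)}(x)\big)$.

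The heart of the argument — the step I expect to carry the real content — is the elementary contiguity identity $\dfrac{(f)_{k}}{(f-1)_{k}}=1+\dfrac{k}{f-1}$, valid whenever $f\notin\{1,0,-1,\dots\}$, which yields $A+kB=A\,\dfrac{(f)_{k}}{(f-1)_{k}}$ exactly when $f-1=A/B$. Taking $f=f_{n}^{(j)}(x)$ as defined in \eqref{fx}, that is so that $f_{n}^{(j)}(x)-1=A_{n}^{(j)}(x)/B_{n}^{(j)}(x)=-n\,h_{n}^{(j)}(x)/\mathscr{D}_{n}^{(j)}(x)$, turns the series into
\[
A_{n}^{(j)}(x)\,p^{n}(-N)_{n}\sum_{k\ge 0}\frac{(-n)_{k}(-x)_{k}\,\big(f_{n}^{(j)}(x)\big)_{k}}{(-N)_{k}\,\big(f_{n}^{(j)}(x)-1\big)_{k}\,k!}\,p^{-k},
\]
which is the ${}_{3}F_{2}$ of \eqref{MSPHR} up to its prefactor. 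Finally, using $(-N)_{n}=-(N-n+1)(-N)_{n-1}$ one checks that $A_{n}^{(j)}(x)\,p^{n}(-N)_{n}=p^{n-1}(-N)_{n-1}h_{n}^{(j)}(x)$, and \eqref{MSPHR} follows. The only delicate points are the shifted-factorial bookkeeping and the (generic) non-vanishing of $\mathscr{D}_{n}^{(j)}(x)$ — equivalently of $B_{n}^{(j)}(x)$ — needed to solve for $f_{n}^{(j)}(x)$; neither is a real obstacle, the identity being one of rational expressions in $x$.
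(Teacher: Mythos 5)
Your proposal is correct and follows essentially the same route as the paper: both start from the connection formula \eqref{SobP1}, reduce everything to a single series whose $k$-th summand carries a factor linear in $k$, and then absorb that factor via the contiguity identity $\frac{(f)_{k}}{(f-1)_{k}}=\frac{f+k-1}{f-1}$ with $f=f_{n}^{(j)}(x)$ chosen exactly so that this works, finishing with $(-N)_{n}=-(N-n+1)(-N)_{n-1}$. The only (cosmetic) difference is how the linear-in-$k$ factor is produced: you route through the structure relation \eqref{StruR1} and the expansion of $x\nabla K_{n}^{p,N}(x)$, whereas the paper expands $K_{n-1}^{p,N}$ directly and uses $(1-n)_{k}=\frac{n-k}{n}(-n)_{k}$.
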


\begin{proof}
In fact, having into account%
\begin{equation*}
\left( -x\right) _{k}=0,\quad \mbox{if}\quad x<k.
\end{equation*}%
as well as $\left( \text{\ref{MRH}}\right) $ and $\left( \text{\ref{SobP1}}%
\right) $ we deduce%
\begin{multline*}
\mathbb{K}_{n}^{(j)}(x)=p^{n}\left( -N\right) _{n}\,{\mathscr C}%
_{1,n}^{(j)}(x)\sum_{0\leq k\leq n}\frac{\left( -n\right) _{k}\left(
-x\right) _{k}}{\left( -N\right) _{k}}\frac{\left( p^{-1}\right) ^{k}}{k!} \\
+p^{n-1}\left( -N\right) _{n-1}\,{\mathscr D}_{1,n}^{(j)}(x)\sum_{0\leq
k\leq n-1}\frac{\left( 1-n\right) _{k}\left( -x\right) _{k}}{\left(
-N\right) _{k}}\frac{\left( p^{-1}\right) ^{k}}{k!}.
\end{multline*}%
Then, using the identity%
\begin{equation*}
\frac{a+k}{a}=\frac{\left( a+1\right) _{k}}{\left( a\right) _{k}},
\end{equation*}%
we get%
\begin{multline*}
\mathbb{K}_{n}^{(j)}(x)=p^{n}\left( -N\right) _{n}\,{\mathscr C}%
_{1,n}^{(j)}(x)\sum_{0\leq k\leq n}\frac{\left( -n\right) _{k}\left(
-x\right) _{k}}{\left( -N\right) _{k}}\frac{\left( p^{-1}\right) ^{k}}{k!} \\
+p^{n-1}\left( -N\right) _{n-1}\,n^{-1}{\mathscr D}_{1,n}^{(j)}(x)\sum_{0%
\leq k\leq n-1}\frac{\left( n-k\right) \left( -n\right) _{k}\left( -x\right)
_{k}}{\left( -N\right) _{k}}\frac{\left( p^{-1}\right) ^{k}}{k!}.
\end{multline*}%
Thus, we have%
\begin{equation*}
\mathbb{K}_{n}^{(j)}(x)=p^{n-1}\left( -N\right) _{n-1}\sum_{0\leq k\leq
n}g_{n}^{\left( j\right) }\left( x\right) \frac{\left( -n\right) _{k}\left(
-x\right) _{k}}{\left( -N\right) _{k}}\frac{\left( p^{-1}\right) ^{k}}{k!},
\end{equation*}%
where%
\begin{equation*}
g_{n}^{\left( j\right) }\left( x\right) =-n^{-1}{\mathscr D}%
_{1,n}^{(j)}(x)\left( f_{n}^{\left( j\right) }\left( x\right) +k-1\right) ,
\end{equation*}%
with%
\begin{equation}
f_{n}^{\left( j\right) }\left( x\right) =\frac{np\left( N-n+1\right) {%
\mathscr C}_{1,n}^{(j)}(x)}{{\mathscr D}_{1,n}^{(j)}(x)}-n+1,  \label{fx}
\end{equation}%
A trivial verification shows that%
\begin{eqnarray*}
g_{n}^{\left( j\right) }\left( x\right) &=&-\frac{{\mathscr D}%
_{1,n}^{(j)}(x)\left( f_{n}^{\left( i\right) }\left( x\right) -1\right) }{n}%
\frac{\left( f_{n}^{\left( j\right) }\left( x\right) \right) _{k}}{\left(
f_{n}^{\left( j\right) }\left( x\right) -1\right) _{k}} \\
&=&-\left( p\left( N-n+1\right) {\mathscr C}_{1,n}^{(j)}(x)-{\mathscr D}%
_{1,n}^{(j)}(x)\right) \frac{\left( f_{n}^{\left( j\right) }\left( x\right)
\right) _{k}}{\left( f_{n}^{\left( j\right) }\left( x\right) -1\right) _{k}}.
\end{eqnarray*}%
Therefore%
\begin{multline*}
\mathbb{K}_{n}^{(j)}(x)=-\left( p\left( N-n+1\right) {\mathscr C}%
_{1,n}^{(j)}(x)-{\mathscr D}_{1,n}^{(j)}(x)\right) \\
\times p^{n-1}\left( -N\right) _{n-1}\sum_{0\leq k\leq n}\frac{\left(
-n\right) _{k}\left( -x\right) _{k}\left( f_{n}^{\left( j\right) }\left(
x\right) \right) _{k}}{\left( -N\right) _{k}\left( f_{n}^{\left( j\right)
}\left( x\right) -1\right) _{k}}\frac{\left( p^{-1}\right) ^{k}}{k!},
\end{multline*}%
which coincides with $\left( \text{\ref{MSPHR}}\right) $. This completes the
proof.
\end{proof}

\section{Linear difference equation of second order}

In this section, we will obtain a second order linear difference equation
that the sequence of monic Kravchuk-Sobolev type orthogonal polynomials $\{%
\mathbb{K}_{n}^{(j)}\}_{n\geq 0}$ satisfies. In order to do that, we will
find the \textit{ladder (creation and annihilation) operators}, using the
connection formula $\left( \text{\ref{SobP1}}\right) $, the three term
recurrence relation $\left( \text{\ref{ReR}}\right) $ satisfied by $%
\{K_{n}^{p,N}\}_{n\geq 0}$ and the structure relation $\left( \text{\ref%
{StruR1}}\right) $.

From $\left( \text{\ref{SobP1}}\right) $ and recurrence relation $\left( 
\text{\ref{ReR}}\right) $ we deduce the following resut%
\begin{equation}
\mathbb{K}_{n-1}^{(j)}(x)={\mathscr C}_{2,n}^{(j)}(x)K_{n}^{p,N}\left(
x\right) +{\mathscr D}_{2,n}^{(j)}(x)K_{n-1}^{p,N}\left( x\right) ,
\label{SobP2}
\end{equation}%
where%
\begin{equation*}
{\mathscr C}_{2,n}^{(j)}(x)=-\frac{{\mathscr D}_{1,n-1}^{(j)}(x)}{\beta
_{n-1}^{p,N}},
\end{equation*}%
and%
\begin{equation*}
{\mathscr D}_{2,n}^{(j)}(x)={\mathscr C}_{1,n-1}^{(j)}(x)+{\mathscr C}%
_{2,n}^{(j)}(x)\left( \alpha _{n-1}^{p,N}-x\right) .
\end{equation*}%
Applying the $\nabla $ operator to $\left( \text{\ref{SobP1}}\right) $ we
have%
\begin{eqnarray*}
\nabla \mathbb{K}_{n}^{(j)}(x) &=&K_{n}^{p,N}\left( x\right) \nabla {%
\mathscr C}_{n}^{(j)}(x)+{\mathscr C}_{n}^{(j)}(x-1)\nabla K_{n}^{p,N}\left(
x\right) \\
&&+K_{n-1}^{p,N}\left( x\right) \nabla {\mathscr D}_{n}^{(j)}(x)+{\mathscr D}%
_{n}^{(j)}(x-1)\nabla K_{n-1}^{p,N}\left( x\right) ,
\end{eqnarray*}%
Here, $\nabla $ denotes the forward operator defined by $\nabla f\left(
x\right) =f\left( x\right) -f\left( x-1\right) $. Next, multiplying the
previous expression by $x$ and using the structure relation $\left( \text{%
\ref{StruR1}}\right) $ as well as the recurrence relation $\left( \text{\ref%
{ReR}}\right) $ we deduce%
\begin{equation}
x\nabla \mathbb{K}_{n}^{(j)}(x)={\mathscr E}_{1,n}^{(j)}(x)K_{n}^{p,N}\left(
x\right) +{\mathscr F}_{1,n}^{(j)}(x)K_{n-1}^{p,N}\left( x\right) ,
\label{DQ1}
\end{equation}%
and%
\begin{equation}
x\nabla \mathbb{K}_{n-1}^{(j)}(x)={\mathscr E}_{2,n}^{(j)}(x)K_{n}^{p,N}%
\left( x\right) +{\mathscr F}_{2,n}^{(j)}(x)K_{n-1}^{p,N}\left( x\right) ,
\label{DQ2}
\end{equation}%
respectively, where%
\begin{equation*}
{\mathscr E}_{1,n}^{(j)}(x)=x\nabla {\mathscr C}_{1,n}^{(j)}(x)+n{\mathscr C}%
_{1,n}^{(j)}(x-1)-\frac{\left( n-1\right) p\left( N-n+2\right) {\mathscr D}%
_{1,n}^{(j)}(x-1)}{\beta _{n-1}^{p,N}},
\end{equation*}%
\begin{multline*}
{\mathscr F}_{1,n}^{(j)}(x)=x\nabla {\mathscr D}_{1,n}^{(j)}(x)+np\left(
N-n+1\right) {\mathscr C}_{1,n}^{(j)}(x-1) \\
+\frac{\left( n-1\right) p\left( N-n+2\right) \left( x-\alpha
_{n-1}^{p,N}\right) {\mathscr D}_{1,n}^{(j)}(x-1)}{\beta _{n-1}^{p,N}} \\
+\left( n-1\right) {\mathscr D}_{1,n}^{(j)}(x-1),
\end{multline*}%
and%
\begin{equation*}
{\mathscr E}_{2,n}^{(j)}(x)=-\frac{{\mathscr F}_{1,n-1}^{(j)}(x)}{\beta
_{n-1}^{p,N}},
\end{equation*}%
\begin{equation*}
{\mathscr F}_{2,n}^{(j)}(x)={\mathscr E}_{1,n-1}^{(j)}(x)+{\mathscr E}%
_{2,n}^{(j)}(x)\left( \alpha _{n-1}^{p,N}-x\right) .
\end{equation*}%
Clearly, from $\left( \text{\ref{SobP1}}\right) $-$\left( \text{\ref{SobP2}}%
\right) $ we deduce%
\begin{equation*}
K_{n}^{p,N}\left( x\right) =\frac{%
\begin{vmatrix}
\mathbb{K}_{n}^{(j)}(x) & \mathbb{K}_{n-1}^{(j)}(x) \\ 
{\mathscr D}_{1,n}^{(j)}(x) & {\mathscr D}_{2,n}^{(j)}(x)%
\end{vmatrix}%
}{%
\begin{vmatrix}
{\mathscr C}_{1,n}^{(j)}(x) & {\mathscr C}_{2,n}^{(j)}(x) \\ 
{\mathscr D}_{1,n}^{(j)}(x) & {\mathscr D}_{2,n}^{(j)}(x)%
\end{vmatrix}%
},\quad \mbox{and}\quad K_{n-1}^{p,N}\left( x\right) =-\frac{%
\begin{vmatrix}
\mathbb{K}_{n}^{(j)}(x) & \mathbb{K}_{n-1}^{(j)}(x) \\ 
{\mathscr C}_{1,n}^{(j)}(x) & {\mathscr C}_{2,n}^{(j)}(x)%
\end{vmatrix}%
}{%
\begin{vmatrix}
{\mathscr C}_{1,n}^{(j)}(x) & {\mathscr C}_{2,n}^{(j)}(x) \\ 
{\mathscr D}_{1,n}^{(j)}(x) & {\mathscr D}_{2,n}^{(j)}(x)%
\end{vmatrix}%
}.
\end{equation*}%
Thus, replacing the above in $\left( \text{\ref{DQ1}}\right) $-$\left( \text{%
\ref{DQ2}}\right) $ we conclude%
\begin{equation}
\Theta _{n}^{\left( j\right) }\left( x\right) \nabla \mathbb{K}%
_{n}^{(j)}(x)+\Lambda _{n}^{\left( j\right) }\left( x;2,1\right) \mathbb{K}%
_{n}^{(j)}(x)=\Lambda _{n}^{\left( j\right) }\left( x;1,1\right) \mathbb{K}%
_{n-1}^{(j)}(x).  \label{Knm1}
\end{equation}%
and%
\begin{equation*}
\Theta _{n}^{\left( j\right) }\left( x\right) \nabla \mathbb{K}%
_{n-1}^{(j)}(x)+\Lambda _{n}^{\left( j\right) }\left( x;1,2\right) \mathbb{K}%
_{n-1}^{(j)}(x)=\Lambda _{n}^{\left( j\right) }\left( x;2,2\right) \mathbb{K}%
_{n}^{(j)}(x).
\end{equation*}%
respectively, where%
\begin{equation}
\Theta _{n}^{\left( j\right) }\left( x\right) =x%
\begin{vmatrix}
{\mathscr C}_{1,n}^{(j)}(x) & {\mathscr C}_{2,n}^{(j)}(x) \\ 
{\mathscr D}_{1,n}^{(j)}(x) & {\mathscr D}_{2,n}^{(j)}(x)%
\end{vmatrix}%
,  \label{Coef1}
\end{equation}%
and%
\begin{equation}
\Lambda _{n}^{\left( j\right) }\left( x;i,k\right) =\left( -1\right) ^{k}%
\begin{vmatrix}
{\mathscr E}_{k,n}^{(j)}(x) & {\mathscr C}_{i,n}^{(j)}(x) \\ 
{\mathscr F}_{k,n}^{(j)}(x) & {\mathscr D}_{i,n}^{(j)}(x)%
\end{vmatrix}%
,\quad i=1,2,\quad k=1,2.  \label{Coef2}
\end{equation}

\begin{proposition}
Let $\left\{ \mathbb{K}_{n}^{(j)}\right\} _{n\geq 0}$ be the sequence of
monic Kravchuck-Sobolev orthogonal polynomials defined by $\left( \text{\ref%
{MSPHR}}\right) $ and let $I$ be the identity operator. Then, the ladder
(destruction and creation) operators $\mathfrak{a}$, $\mathfrak{a}^{\dagger
} $ are defined by%
\begin{equation}
\mathfrak{a}=\Theta _{n}^{\left( j\right) }\left( x\right) \nabla +\Lambda
_{n}^{\left( j\right) }\left( x;2,1\right) I,  \label{eqDO}
\end{equation}%
\begin{equation}
\mathfrak{a}^{\dagger }=\Theta _{n}^{\left( j\right) }\left( x\right) \nabla
+\Lambda _{n}^{\left( j\right) }\left( x;1,2\right) I,  \label{eqCO}
\end{equation}%
which verify%
\begin{equation}
\mathfrak{a}\left( \mathbb{K}_{n}^{(j)}(x)\right) =\Lambda _{n}^{\left(
j\right) }\left( x;1,1\right) \mathbb{K}_{n-1}^{(j)}(x),  \label{DO}
\end{equation}%
\begin{equation}
\mathfrak{a}^{\dagger }\left( \mathbb{K}_{n-1}^{(j)}(x)\right) =\Lambda
_{n}^{\left( j\right) }\left( x;2,2\right) \mathbb{K}_{n}^{(j)}(x),
\label{CO}
\end{equation}%
where $\Theta _{n}^{\left( j\right) }\left( x\right) $ and $\Lambda
_{n}^{\left( j\right) }\left( x;i,k\right) $ with $i,k=1,2$ are given in $%
\left( \text{\ref{Coef1}}\right) $-$\left( \text{\ref{Coef2}}\right) $,
respectively.
\end{proposition}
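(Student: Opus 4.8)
The proof is essentially immediate once one recognizes that, under the definitions \eqref{eqDO}--\eqref{eqCO}, the claim \eqref{DO} is exactly identity \eqref{Knm1} and the claim \eqref{CO} is exactly the companion identity displayed right after \eqref{Knm1}. So the plan is to assemble \eqref{Knm1} and its companion rigorously from the ingredients already at hand, and then to read off \eqref{DO}--\eqref{CO} by substituting the expressions \eqref{eqDO}--\eqref{eqCO} for $\mathfrak{a}$ and $\mathfrak{a}^{\dagger}$.

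To build \eqref{Knm1}, I would start from the two connection formulas \eqref{SobP1} and \eqref{SobP2}, which write $\mathbb{K}_{n}^{(j)}$ and $\mathbb{K}_{n-1}^{(j)}$ as $\mathbb{R}(x)$-combinations of $K_{n}^{p,N}$ and $K_{n-1}^{p,N}$ with the explicit coefficients $\mathscr{C}_{i,n}^{(j)},\mathscr{D}_{i,n}^{(j)}$. Applying the operator $\nabla$ to \eqref{SobP1} through the Leibniz rule $\nabla(fg)(x)=f(x)\nabla g(x)+g(x-1)\nabla f(x)$, multiplying by $x$, and then using the structure relation \eqref{StruR1} to eliminate $x\nabla K_{n}^{p,N}$ and $x\nabla K_{n-1}^{p,N}$ (the latter producing a $K_{n-2}^{p,N}$ that is reabsorbed via the three term recurrence \eqref{ReR}), one obtains \eqref{DQ1}; the same computation carried out one step lower and combined again with \eqref{ReR} yields \eqref{DQ2}. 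At this stage $\mathbb{K}_{n}^{(j)}$, $\mathbb{K}_{n-1}^{(j)}$, $x\nabla\mathbb{K}_{n}^{(j)}$, $x\nabla\mathbb{K}_{n-1}^{(j)}$ are all expressed over the pair $\{K_{n}^{p,N},K_{n-1}^{p,N}\}$, with coefficient vectors whose entries are the functions $\mathscr{C}_{i,n}^{(j)},\mathscr{D}_{i,n}^{(j)},\mathscr{E}_{i,n}^{(j)},\mathscr{F}_{i,n}^{(j)}$.

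Next I would eliminate $K_{n}^{p,N}$ and $K_{n-1}^{p,N}$. Since any three vectors in the two-dimensional space $\mathbb{R}(x)^{2}$ are linearly dependent, there is a nontrivial relation between $x\nabla\mathbb{K}_{n}^{(j)}$, $\mathbb{K}_{n}^{(j)}$ and $\mathbb{K}_{n-1}^{(j)}$ whose coefficients are the $2\times2$ minors of the relevant coefficient array; normalizing as in \eqref{Coef1}--\eqref{Coef2} gives exactly \eqref{Knm1}, and the dual choice involving $x\nabla\mathbb{K}_{n-1}^{(j)}$ gives the companion relation. (Equivalently --- and this is what the display preceding \eqref{Knm1} does --- one solves \eqref{SobP1}--\eqref{SobP2} for $K_{n}^{p,N}$ and $K_{n-1}^{p,N}$ by Cramer's rule and substitutes into \eqref{DQ1}--\eqref{DQ2}, which is legitimate provided $\Theta_{n}^{(j)}(x)\not\equiv0$.) Feeding \eqref{eqDO}--\eqref{eqCO} into the two resulting identities then delivers \eqref{DO} and \eqref{CO}, which is the assertion.

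The laborious part is the $\nabla$-differentiation of \eqref{SobP1}--\eqref{SobP2} and the bookkeeping of the argument shifts $x\mapsto x-1$ appearing in $\mathscr{C}_{1,n}^{(j)},\mathscr{D}_{1,n}^{(j)}$ when they meet \eqref{StruR1} and \eqref{ReR}; this is routine but must be done with care to land on the stated $\mathscr{E}_{i,n}^{(j)},\mathscr{F}_{i,n}^{(j)}$. The one genuinely substantive point --- the reason the statement is not vacuous --- is to confirm that $\Theta_{n}^{(j)}(x)$, equivalently the determinant $\mathscr{C}_{1,n}^{(j)}\mathscr{D}_{2,n}^{(j)}-\mathscr{C}_{2,n}^{(j)}\mathscr{D}_{1,n}^{(j)}$, is not identically zero, so that $\mathfrak{a},\mathfrak{a}^{\dagger}$ are nontrivial and the Cramer step above is valid. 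I would verify this from the explicit formulas for the $\mathscr{C}_{i,n}^{(j)},\mathscr{D}_{i,n}^{(j)}$ --- for instance from the behaviour as $x\to\infty$, where $\mathscr{C}_{1,n}^{(j)}(x)\to1$ and $\mathscr{D}_{1,n}^{(j)}(x)\to0$, so that the two coefficient columns cannot be proportional over $\mathbb{R}(x)$.
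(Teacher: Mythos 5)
Your proposal is correct and follows essentially the same route as the paper: the Proposition is just the operator-form packaging of \eqref{Knm1} and its companion identity, which the paper derives in the text immediately preceding it from \eqref{SobP1}--\eqref{SobP2}, the structure relation \eqref{StruR1}, the recurrence \eqref{ReR}, and the Cramer elimination of $K_{n}^{p,N}$ and $K_{n-1}^{p,N}$. Your added observation that one should check $\Theta_{n}^{(j)}(x)\not\equiv 0$ (e.g.\ via the asymptotics ${\mathscr C}_{1,n}^{(j)}\to 1$, ${\mathscr D}_{1,n}^{(j)}\to 0$) is a point of care the paper leaves implicit, but it does not change the argument.
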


\begin{theorem}
Let $\left\{ \mathbb{K}_{n}^{(j)}\right\} _{n\geq 0}$ be the sequence of
monic polynomials orthogonal with respect to the inner product $\left( \text{%
\ref{SobIP}}\right) $. Then, the following statement holds. For all $n\geq 0$%
\begin{equation}
\mathcal{F}_{n}^{\left( j\right) }\left( x\right) \nabla ^{2}\mathbb{K}%
_{n}^{(j)}\left( x\right) +\mathcal{G}_{n}^{\left( j\right) }\left( x\right)
\nabla \mathbb{K}_{n}^{(j)}\left( x\right) +\mathcal{H}_{n}^{\left( j\right)
}\left( x\right) \mathbb{K}_{n}^{(j)}\left( x\right) =0,  \label{HEq}
\end{equation}%
where%
\begin{equation*}
\mathcal{F}_{n}^{\left( j\right) }\left( x\right) =\Theta _{n}^{\left(
j\right) }\left( x\right) \Theta _{n}^{\left( j\right) }\left( x-1\right) ,
\end{equation*}%
\begin{multline*}
\mathcal{G}_{n}^{\left( j\right) }\left( x\right) =\Theta _{n}^{\left(
j\right) }\left( x\right) (\nabla \Theta _{n}^{\left( j\right) }\left(
x\right) +\Lambda _{n}^{\left( j\right) }\left( x-1;2,1\right) +\Lambda
_{n}^{\left( j\right) }\left( x;1,2\right) ) \\
-\frac{\nabla \Lambda _{n}^{\left( j\right) }\left( x;1,1\right) (\Theta
_{n}^{\left( j\right) }\left( x\right) +\Lambda _{n}^{\left( j\right)
}\left( x;1,2\right) )\Theta _{n}^{\left( j\right) }\left( x\right) }{%
\Lambda _{n}^{\left( j\right) }\left( x;1,1\right) },
\end{multline*}%
and%
\begin{multline*}
\mathcal{H}_{n}^{\left( j\right) }\left( x\right) =\Theta _{n}^{\left(
j\right) }\left( x\right) \nabla \Lambda _{n}^{\left( j\right) }\left(
x;2,1\right) +\Lambda _{n}^{\left( j\right) }\left( x;1,2\right) \Lambda
_{n}^{\left( j\right) }\left( x;2,1\right)  \\
-\frac{\nabla \Lambda _{n}^{\left( j\right) }\left( x;1,1\right) \Lambda
_{n}^{\left( j\right) }\left( x;2,1\right) (\Theta _{n}^{\left( j\right)
}\left( x\right) +\Lambda _{n}^{\left( j\right) }\left( x;1,2\right) )}{%
\Lambda _{n}^{\left( j\right) }\left( x;1,1\right) } \\
-\Lambda _{n}^{\left( j\right) }\left( x-1;1,1\right) \Lambda _{n}^{\left(
j\right) }\left( x;2,2\right) ,
\end{multline*}%
where $\Theta _{n}^{\left( j\right) }\left( x\right) $ and $\Lambda
_{n}^{\left( j\right) }\left( x;i,k\right) $ with $i,k=1,2$ are given in $%
\left( \text{\ref{Coef1}}\right) $-$\left( \text{\ref{Coef2}}\right) $.
\end{theorem}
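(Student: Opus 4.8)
The plan is to convert the two first-order ladder relations of the preceding proposition into a single second-order difference equation, in the classical manner. Written out with the definitions \eqref{eqDO} and \eqref{eqCO}, the identities \eqref{DO} and \eqref{CO} become
\[
\Theta_n^{(j)}(x)\nabla\mathbb{K}_n^{(j)}(x)+\Lambda_n^{(j)}(x;2,1)\mathbb{K}_n^{(j)}(x)=\Lambda_n^{(j)}(x;1,1)\,\mathbb{K}_{n-1}^{(j)}(x),
\]
\[
\Theta_n^{(j)}(x)\nabla\mathbb{K}_{n-1}^{(j)}(x)+\Lambda_n^{(j)}(x;1,2)\mathbb{K}_{n-1}^{(j)}(x)=\Lambda_n^{(j)}(x;2,2)\,\mathbb{K}_n^{(j)}(x).
\]
From the first I would solve for
\[
\mathbb{K}_{n-1}^{(j)}(x)=\frac{\Theta_n^{(j)}(x)\nabla\mathbb{K}_n^{(j)}(x)+\Lambda_n^{(j)}(x;2,1)\mathbb{K}_n^{(j)}(x)}{\Lambda_n^{(j)}(x;1,1)},
\]
and then apply $\nabla$ to this identity to obtain $\nabla\mathbb{K}_{n-1}^{(j)}(x)$ as an explicit linear combination of $\nabla^2\mathbb{K}_n^{(j)}(x)$, $\nabla\mathbb{K}_n^{(j)}(x)$ and $\mathbb{K}_n^{(j)}(x)$.

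For that step the only tools needed are the Leibniz rule $\nabla(fg)(x)=f(x)\nabla g(x)+g(x-1)\nabla f(x)$, its quotient form $\nabla(f/g)(x)=\bigl(g(x)\nabla f(x)-f(x)\nabla g(x)\bigr)/\bigl(g(x)g(x-1)\bigr)$, and the elementary reductions $g(x-1)=g(x)-\nabla g(x)$ and $\nabla\mathbb{K}_n^{(j)}(x-1)=\nabla\mathbb{K}_n^{(j)}(x)-\nabla^2\mathbb{K}_n^{(j)}(x)$; I would also use the identities $\Theta_n^{(j)}(x)-\nabla\Theta_n^{(j)}(x)=\Theta_n^{(j)}(x-1)$ and $\Lambda_n^{(j)}(x;2,1)-\nabla\Lambda_n^{(j)}(x;2,1)=\Lambda_n^{(j)}(x-1;2,1)$ to push every argument back to $x$. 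Substituting the resulting expression for $\nabla\mathbb{K}_{n-1}^{(j)}(x)$ together with the formula for $\mathbb{K}_{n-1}^{(j)}(x)$ into the second relation above and multiplying through by $\Lambda_n^{(j)}(x-1;1,1)$ (the only spurious denominator, simplified via $\Lambda_n^{(j)}(x-1;1,1)=\Lambda_n^{(j)}(x;1,1)-\nabla\Lambda_n^{(j)}(x;1,1)$) yields a relation of the shape \eqref{HEq} from which $\mathbb{K}_{n-1}^{(j)}$ has been eliminated.

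It then remains to collect the coefficients of $\nabla^2\mathbb{K}_n^{(j)}(x)$, $\nabla\mathbb{K}_n^{(j)}(x)$ and $\mathbb{K}_n^{(j)}(x)$ and to match them against $\mathcal{F}_n^{(j)}(x)$, $\mathcal{G}_n^{(j)}(x)$ and $\mathcal{H}_n^{(j)}(x)$. The coefficient of $\nabla^2\mathbb{K}_n^{(j)}(x)$ comes out as $\Theta_n^{(j)}(x)\bigl(\Theta_n^{(j)}(x)-\nabla\Theta_n^{(j)}(x)\bigr)=\Theta_n^{(j)}(x)\Theta_n^{(j)}(x-1)=\mathcal{F}_n^{(j)}(x)$, and the other two collapse to $\mathcal{G}_n^{(j)}(x)$ and $\mathcal{H}_n^{(j)}(x)$ after one further use of $\Lambda_n^{(j)}(x;2,1)-\nabla\Lambda_n^{(j)}(x;2,1)=\Lambda_n^{(j)}(x-1;2,1)$ and of the reduction of $\Lambda_n^{(j)}(x-1;1,1)$. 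No additional input is required: the recurrence \eqref{ReR}, the structure relation \eqref{StruR1} and the connection formula \eqref{SobP1} have all been absorbed into the ladder operators, so the entire argument is a bounded finite-difference computation.

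The step I expect to be the genuine obstacle is clerical rather than conceptual: keeping faithful track of the shifts $x\mapsto x-1$ produced each time $\nabla$ hits a product or a quotient, and consistently rewriting every value at $x-1$ --- of $\mathbb{K}_n^{(j)}$, of $\nabla\mathbb{K}_n^{(j)}$, of $\Theta_n^{(j)}$ and of the $\Lambda_n^{(j)}(\,\cdot\,;i,k)$ --- in terms of values at $x$ before grouping, while retaining $\Lambda_n^{(j)}(x;1,1)$ but not $\Lambda_n^{(j)}(x-1;1,1)$ in the denominator. A sign slip or a missed shift at that stage is the likeliest way for the closed forms of $\mathcal{G}_n^{(j)}$ and $\mathcal{H}_n^{(j)}$ to come out incorrectly.
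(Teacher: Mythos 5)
Your proposal is correct and follows essentially the same route as the paper: both arguments eliminate $\mathbb{K}_{n-1}^{(j)}$ between the two first-order ladder relations using the discrete product rule, and your bookkeeping (multiplying through by $\Lambda_n^{(j)}(x-1;1,1)$ and rewriting it as $\Lambda_n^{(j)}(x;1,1)-\nabla\Lambda_n^{(j)}(x;1,1)$) reproduces exactly the stated $\mathcal{F}_n^{(j)}$, $\mathcal{G}_n^{(j)}$ and $\mathcal{H}_n^{(j)}$. The paper merely packages the same elimination as the composition $\mathfrak{a}^{\dagger}\left[\mathfrak{a}\left(\mathbb{K}_n^{(j)}(x)\right)\right]$, expanding the left-hand side directly and using \eqref{CO} together with \eqref{Knm1} on the right-hand side, which avoids the quotient rule you invoke but is otherwise the identical computation.
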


\begin{proof}
In fact, from \eqref{DO} we have%
\begin{equation}
\mathfrak{a}^{\dagger }\left[ \mathfrak{a}\left( \mathbb{K}%
_{n}^{(j)}(x)\right) \right] =\mathfrak{a}^{\dagger }\left[ \Lambda
_{n}^{\left( j\right) }\left( x;1,1\right) \mathbb{K}_{n-1}^{(j)}(x)\right] .
\label{eqP}
\end{equation}%
Next, applying \eqref{eqDO} to left hand member of the previous expression,
we get%
\begin{multline*}
\mathfrak{a}^{\dagger }\left[ \mathfrak{a}\left( \mathbb{K}%
_{n}^{(j)}(x)\right) \right] =\mathfrak{a}^{\dagger }\left[ \Theta
_{n}^{\left( j\right) }\left( x\right) \nabla \mathbb{K}_{n}^{(j)}(x)+%
\Lambda _{n}^{\left( j\right) }\left( x;2,1\right) \mathbb{K}_{n}^{(j)}(x)%
\right] \\
=\Theta _{n}^{\left( j\right) }\left( x\right) \nabla \left[ \Theta
_{n}^{\left( j\right) }\left( x\right) \nabla \mathbb{K}_{n}^{(j)}(x)+%
\Lambda _{n}^{\left( j\right) }\left( x;2,1\right) \mathbb{K}_{n}^{(j)}(x)%
\right] \\
+\Lambda _{n}^{\left( j\right) }\left( x;1,2\right) \Theta _{n}^{\left(
j\right) }\left( x\right) \nabla \mathbb{K}_{n}^{(j)}(x) \\
+\Lambda _{n}^{\left( j\right) }\left( x;1,2\right) \Lambda _{n}^{\left(
j\right) }\left( x;2,1\right) \mathbb{K}_{n}^{(j)}(x).
\end{multline*}%
Thus, using the property%
\begin{equation*}
\nabla \left[ f\left( x\right) g\left( x\right) \right] =\nabla f\left(
x\right) g\left( x\right) +f\left( x-1\right) \nabla g\left( x\right) ,
\end{equation*}%
we deduce%
\begin{multline}
\mathfrak{a}^{\dagger }\left[ \mathfrak{a}\left( \mathbb{K}%
_{n}^{(j)}(x)\right) \right] =\Theta _{n}^{\left( j\right) }\left( x\right)
\Theta _{n}^{\left( j\right) }\left( x-1\right) \nabla ^{2}\mathbb{K}%
_{n}^{(j)}(x) \\
+\Theta _{n}^{\left( j\right) }\left( x\right) (\nabla \Theta _{n}^{\left(
j\right) }\left( x\right) +\Lambda _{n}^{\left( j\right) }\left(
x-1;2,1\right) +\Lambda _{n}^{\left( j\right) }\left( x;1,2\right) )\nabla 
\mathbb{K}_{n}^{(j)}(x) \\
+(\Theta _{n}^{\left( j\right) }\left( x\right) \nabla \Lambda _{n}^{\left(
j\right) }\left( x;2,1\right) +\Lambda _{n}^{\left( j\right) }\left(
x;1,2\right) \Lambda _{n}^{\left( j\right) }\left( x;2,1\right) )\mathbb{K}%
_{n}^{(j)}(x).  \label{iqI}
\end{multline}%
On the other hand, having into account \eqref{eqCO}, \eqref{CO} and %
\eqref{Knm1} we deduce%
\begin{multline}
\mathfrak{a}^{\dagger }\left[ \Lambda _{n}^{\left( j\right) }\left(
x;1,1\right) \mathbb{K}_{n-1}^{(j)}(x)\right] =\Big[\Lambda _{n}^{\left(
j\right) }\left( x-1;1,1\right) \Lambda _{n}^{\left( j\right) }\left(
x;2,2\right) \\
+\frac{\nabla \Lambda _{n}^{\left( j\right) }\left( x;1,1\right) \Lambda
_{n}^{\left( j\right) }\left( x;2,1\right) (\Theta _{n}^{\left( j\right)
}\left( x\right) +\Lambda _{n}^{\left( j\right) }\left( x;1,2\right) )}{%
\Lambda _{n}^{\left( j\right) }\left( x;1,1\right) }\Big]\mathbb{K}%
_{n}^{(j)}(x) \\
+\frac{\nabla \Lambda _{n}^{\left( j\right) }\left( x;1,1\right) (\Theta
_{n}^{\left( j\right) }\left( x\right) +\Lambda _{n}^{\left( j\right)
}\left( x;1,2\right) )\Theta _{n}^{\left( j\right) }\left( x\right) }{%
\Lambda _{n}^{\left( j\right) }\left( x;1,1\right) }\nabla \mathbb{K}%
_{n}^{(j)}(x).  \label{iqII}
\end{multline}%
Finally, equaling \eqref{iqI} and \eqref{iqII} we arrived to the desired
result.
\end{proof}

\begin{theorem}
Let $\left\{ \mathbb{K}_{n}^{(j)}\right\} _{n\geq 0}$ be the sequence of
monic polynomials orthogonal with respect to the inner product $\left( \text{%
\ref{SobIP}}\right) $. Then, the following statement holds. For all $n\geq 0$%
\begin{equation}
\mathcal{\tilde{F}}_{n}^{\left( j\right) }\left( x\right) \Delta \nabla 
\mathbb{K}_{n}^{(j)}\left( x\right) +\mathcal{\tilde{G}}_{n}^{\left(
j\right) }\left( x\right) \Delta \mathbb{K}_{n}^{(j)}\left( x\right) +%
\mathcal{\tilde{H}}_{n}^{\left( j\right) }\left( x\right) \mathbb{K}%
_{n}^{(j)}\left( x\right) =0,  \label{HypEqq}
\end{equation}%
where%
\begin{equation*}
\mathcal{\tilde{F}}_{n}^{\left( j\right) }\left( x\right) =\mathcal{F}%
_{n}^{\left( j\right) }\left( x+1\right) ,\quad \mbox{and}\quad \mathcal{%
\tilde{H}}_{n}^{\left( j\right) }\left( x\right) =\mathcal{H}_{n}^{\left(
j\right) }\left( x+1\right) .
\end{equation*}%
and%
\begin{equation*}
\mathcal{\tilde{G}}_{n}^{\left( j\right) }\left( x\right) =\mathcal{G}%
_{n}^{\left( j\right) }\left( x+1\right) +\mathcal{H}_{n}^{\left( j\right)
}\left( x+1\right) .
\end{equation*}
\end{theorem}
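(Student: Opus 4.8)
The plan is to obtain \eqref{HypEqq} from the second-order difference equation \eqref{HEq} of the preceding theorem by a single unit shift of the independent variable, using only the elementary algebra of the shift operator. Write $E$ for the forward shift, $Ef(x)=f(x+1)$, so that $\Delta=E-I$ and $\nabla=I-E^{-1}$; hence $\Delta=E\nabla$, $\Delta\nabla=E\nabla^{2}$ and $E=I+\Delta$. Translated into pointwise statements these read
\[
\nabla^{2}g(x+1)=\Delta\nabla g(x),\qquad \nabla g(x+1)=\Delta g(x),\qquad g(x+1)=g(x)+\Delta g(x)
\]
for an arbitrary function $g$, each being immediate from the definitions $\Delta f(x)=f(x+1)-f(x)$ and $\nabla f(x)=f(x)-f(x-1)$.

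First I would replace $x$ by $x+1$ throughout \eqref{HEq}; this is legitimate because $x$ is a free variable there. One obtains
\[
\mathcal{F}_{n}^{(j)}(x+1)\,\nabla^{2}\mathbb{K}_{n}^{(j)}(x+1)+\mathcal{G}_{n}^{(j)}(x+1)\,\nabla\mathbb{K}_{n}^{(j)}(x+1)+\mathcal{H}_{n}^{(j)}(x+1)\,\mathbb{K}_{n}^{(j)}(x+1)=0.
\]
Applying the three identities above with $g=\mathbb{K}_{n}^{(j)}$ converts the first summand into $\mathcal{F}_{n}^{(j)}(x+1)\,\Delta\nabla\mathbb{K}_{n}^{(j)}(x)$, the second into $\mathcal{G}_{n}^{(j)}(x+1)\,\Delta\mathbb{K}_{n}^{(j)}(x)$, and the third into $\mathcal{H}_{n}^{(j)}(x+1)\bigl(\mathbb{K}_{n}^{(j)}(x)+\Delta\mathbb{K}_{n}^{(j)}(x)\bigr)$. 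Grouping the coefficients of $\Delta\nabla\mathbb{K}_{n}^{(j)}(x)$, of $\Delta\mathbb{K}_{n}^{(j)}(x)$ and of $\mathbb{K}_{n}^{(j)}(x)$ then yields precisely $\mathcal{\tilde{F}}_{n}^{(j)}(x)=\mathcal{F}_{n}^{(j)}(x+1)$, $\mathcal{\tilde{G}}_{n}^{(j)}(x)=\mathcal{G}_{n}^{(j)}(x+1)+\mathcal{H}_{n}^{(j)}(x+1)$ and $\mathcal{\tilde{H}}_{n}^{(j)}(x)=\mathcal{H}_{n}^{(j)}(x+1)$, which is the claimed equation \eqref{HypEqq}.

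There is no real analytic obstacle: once \eqref{HEq} is in hand, the argument is a purely formal manipulation with $E$, $\Delta$ and $\nabla$. The only points worth a line of comment are that \eqref{HEq} holds identically in $x$ (so the substitution $x\mapsto x+1$ is admissible), and that the shifted coefficients are still of the same nature as the original ones — the single denominator $\Lambda_{n}^{(j)}(x;1,1)$ occurring in $\mathcal{G}_{n}^{(j)}$ and $\mathcal{H}_{n}^{(j)}$ being simply replaced by $\Lambda_{n}^{(j)}(x+1;1,1)$, which does not disturb the computation. This completes the plan for the proof of \eqref{HypEqq}.
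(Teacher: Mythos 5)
Your proposal is correct and follows essentially the same route as the paper: both arguments reduce \eqref{HypEqq} to \eqref{HEq} by a unit shift $x\mapsto x+1$ together with the identities $\nabla g(x+1)=\Delta g(x)$, $\nabla^{2}g(x+1)=\Delta\nabla g(x)$ and $g(x+1)=g(x)+\Delta g(x)$, the last of which is what produces the extra $\mathcal{H}_{n}^{(j)}(x+1)$ in $\mathcal{\tilde{G}}_{n}^{(j)}$. The only cosmetic difference is the order of operations (you shift first and then convert the operators, whereas the paper converts $\nabla f(x)=\Delta f(x-1)$ first and then shifts), which changes nothing.
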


\begin{proof}
In fact, applying the property $\nabla f\left( x\right) =\Delta f\left(
x-1\right) $ to \eqref{HEq} we have%
\begin{equation*}
\mathcal{F}_{n}^{\left( j\right) }\left( x\right) \nabla \Delta \mathbb{K}%
_{n}^{(j)}\left( x-1\right) +\mathcal{G}_{n}^{\left( j\right) }\left(
x\right) \Delta \mathbb{K}_{n}^{(j)}\left( x-1\right) +\mathcal{H}%
_{n}^{\left( j\right) }\left( x\right) \mathbb{K}_{n}^{(j)}\left( x\right)
=0.
\end{equation*}%
Then, replacing $x$ by $x+1$, we deduce%
\begin{multline*}
\mathcal{F}_{n}^{\left( j\right) }\left( x+1\right) \Delta \nabla \mathbb{K}%
_{n}^{(j)}\left( x\right) +\mathcal{G}_{n}^{\left( j\right) }\left(
x+1\right) \Delta \mathbb{K}_{n}^{(j)}\left( x\right)  \\
+\mathcal{H}_{n}^{\left( j\right) }\left( x+1\right) \mathbb{K}%
_{n}^{(j)}\left( x+1\right) -\mathcal{H}_{n}^{\left( j\right) }\left(
x+1\right) \mathbb{K}_{n}^{(j)}\left( x\right)  \\
+\mathcal{H}_{n}^{\left( j\right) }\left( x+1\right) \mathbb{K}%
_{n}^{(j)}\left( x\right) =0.
\end{multline*}%
which coincides with \eqref{HypEqq}.
\end{proof}

\bibliographystyle{plain}

\def\cprime{$'$}


\end{document}